\newtheorem{theorem}{Theorem}[section] 
\newtheorem{lemma}[theorem]{Lemma}     
\newtheorem{corollary}[theorem]{Corollary}
\newtheorem{proposition}[theorem]{Proposition}
\begin{document}
\title[Entropy of AT$(n)$ systems]{Entropy of AT$(n)$ systems}

\author[Radu-B. Munteanu]{Radu-B. Munteanu}
\address{Department of Mathematics, University of Bucharest\\
14 Academiei St, 010014, Sector 1\\
 Bucharest, Romania}
\email{radu-bogdan.munteanu@g.unibuc.ro}

\date{}

\begin{abstract}
In this paper we show that any ergodic measure preserving transformation of a standard probability space which is AT$(n)$ for some positive integer $n$ has zero entropy. We show that for every positive integer $n$ any Bernoulli shift is not AT($n$). We also give an example of a transformation which has zero entropy but does not have property AT($n$), for any integer $n\geq 1$.
\end{abstract}
\maketitle

\section{Introduction}

In order to answer some questions from ergodic theory closely related to the theory of von Neumann algebras, T. Giordano and D. Handelman \cite{GH} reformulated matrix valued random walks and their associated group actions in terms of dimension spaces. Their approach leads to a notion of rank called AT$(n)$, for integers $n\geq 1$. This new concept generalizes approximate transitivity (shortly AT), a property of ergodic actions introduced by A. Connes and E. J. Woods \cite{CW} in the theory of von Neumann algebras, which occurs for $n=1$.

Throughout this paper $(X,\mathfrak{B},\mu,T)$ denotes a dynamical system, where $T$ is a measure preserving automorphism of a standard probability space $(X,\mathfrak{B},\mu)$. For an integer $n\geq 1$, we say that the dynamical system $(X,\mathfrak{B},\mu,T)$ (or simply $T$) is AT$(n)$ if for any $\varepsilon>0$, for any finite set of functions $\{f_{i}\}_{i=1}^{k}$ from $ L^{1}_{+}(X,\mu)$ there exist $n$ functions $\{g_{m}\} _{m = 1, . . . , n}\in L^{1}_{+}(X,\mu)$, a
positive integer $N$, nonnegative reals $\{\alpha_{i,j}^{(m)}\}^{m=1,2,\ldots,n}_{i=1,2,\ldots,k,  j=1,2,...,N}$ and integers $\{t^{(m)}_j \}^{m=1,\ldots,n}_{j=1,\ldots,N}$, such that
\begin{equation}\label{ATn}
\|f_{i}-\sum_{m=1}^{n}\sum_{j=1}^{N}\alpha_{i,j}^{(m)}g_{m}\circ T^{t_{j}^{(m)}}\|_{1}<\varepsilon,
\end{equation}
for $i=1,2,\ldots, k$.

Note it is sufficient to ask that equation (\ref{ATn}) holds for $k=n+1$. Also, one can demand that $\|g_{m}\|=1$ for all $m$ and that $\sum_{m=1}^{n}\sum_{j=1}^{N}\alpha_{i,j}^{(m)}=\|f_{i}\|$, for all $i$.

Remark that a rank $n$ transformation (see \cite{F}) is AT$(n)$ and every AT$(n)$ system enjoys AT$(n+1)$ property. The techniques developed by T. Giordano and D. Handelman  in \cite{GH} allowed the authors to construct examples of AT$(2)$ transformation which are not AT. In \cite{L}, it was proved that the measure preserving automorphism corresponding to the Rudin-Shapiro substitution, which has rank $4$ (and therefore is AT$(4)$), is not AT.

Dynamical entropy is an invariant of measure theoretic dynamical systems introduced by A. N. Kolmogorov \cite{K} and brought to its contemporary form by Y. G. Sinai  \cite{Si}.  In 1970, D. Ornstein \cite{O} showed that Kolmogorov-Sinai entropy completely classifies the Bernoulli shifts, a basic problem which couldn't be solved for many decades. It was proved by A. Connes and E. J. Woods \cite{CW} that any dynamical system which is AT has zero entropy. Different proofs of this result can be found in \cite{D}, \cite{DQ} and \cite{L}. It is natural to ask whether AT$(n)$ dynamical systems have also zero entropy.

In this paper we give a necessary condition for shift maps to be AT$(n)$ (Theorem 2.1). For such transformations, this is a generalization of the necessary condition for an action to be AT from \cite{DQ}.

We use this condition to prove that, for any positive integer $n$, Bernoulli shifts are not AT$(n)$, for $n\geq 1$. An important consequence of this result is Corollary 3.3, which shows that any finite measure preserving transformation which is AT$(n)$ for some positive integer $n$, has zero entropy. In \cite{DQ} A. Dooley and A. Quas proved that zero entropy is not sufficient for AT; they gavr an example of a zero entropy transformation which is not approximately transitive. 
In this paper we prove that the zero entropy and not AT transformtion from \cite{DQ} is not AT($n$), for any $n\geq 1$.

\section{A necessary condition for shift maps to be AT$(n)$}

\noindent Let $\Sigma_{k}=\{1,2,\ldots,k\}^{\mathbb{Z}}$ be the shift space over the alphabet $\{1,2,\ldots,k\}$. A cylinder set in $\Sigma_{k}$ is a set of the form $[y_{0},y_{1}, \ldots , y_{m}]_{n}=\{x\in \Sigma_{k}; x_{n}=y_{0}, x_{n+1}=y_{1},\ldots ,x_{m+n}=y_{m}\}$, where $m,n\in\mathbb{Z}$, $m\geq 0$ and $y_{i}\in\{1,2,\ldots k\}$. The cylinders of the form $[y_{0},y_{1}, \ldots, y_{2n}]_{-n}$ with $n\geq 0$ are called centered cylinders. We denote by $\mathfrak{B}_{k}$ the $\sigma$-algebra generated by the cylinder sets of the shift space $\Sigma_{k}$. The map  $S_k:\Sigma_{k}\rightarrow\Sigma_{k}$ defined by $$(S_kx)_{n}=x_{n+1} \text{ for }x=(x_{n})_{n\in \mathbb{Z}}$$
is called the $k$-shift map.

Let $p = (p(1),\ldots, p(k))$ be a probability vector with non-zero entries, i.e. $p(i)>0$ for $i=1,2,\ldots, k$ and $\sum_{i=1}^{k}p(i)=1$.
Let $\mu_{p}$ be the unique probability measure on $(\Sigma_{k},\mathfrak{B}_{k})$, which on cylinder sets is given by
\[\mu_{p}([y_{0},y_{1}, \ldots y_{m}]_{n})=p(y_{0})p(y_{1})\cdots p(y_{m}).\]
This measure is called the Bernoulli measure determined by $p$. The dynamical system $(\Sigma_{k},\mathfrak{B}_{k},\mu_{p},S_k)$ is called the Bernoulli shift associated to the probability vector $p$.

Let $\Lambda$ be a finite subset of the integers. A funny word on the alphabet $\{1,2,\ldots,k\}$ based on $\Lambda$ is a finite sequence $W=(W_{n})_{n\in \Lambda}$ with $W_{n}\in \{1,2,\ldots,k\}$. For two funny words $W,W'$
based on the same set $\Lambda$ their Hamming distance is given by
\[d_{\Lambda}(W,W')=\frac{1}{|\Lambda|}\text{card}\{n\in\Lambda: W_{n}\neq W'_{n}\}.\]
If $x\in\Sigma_{k}$ and $\Lambda$ is a finite set in $\mathbb{Z}$ we denote by $x|_{\Lambda}$ the funny word $(x_{n})_{n\in\Lambda}$.

The following theorem provides a necessary condition for shift maps to be AT$(n)$.

\begin{theorem}\label{t1}
Let $S_k$ be the shift map on the space $\Sigma_{k}$ and $\nu$ be a non atomic shift invariant probability measure on $\Sigma_{k}$. Assume that $S_k$ is AT$(n)$ but not AT$(n-1)$, for some $n\geq 2$.
Then for every $\varepsilon>0$ and every  $\delta>0$ there exist finite sets $\Lambda^{1},\Lambda^{2},\ldots,\Lambda^{n}$, with
$\min\{|\Lambda^{i}|; i=1,2,\ldots,n\}$ arbitrarily large, and funny words $W^{i}$ based on $\Lambda^{i}$ for $i=1,2,\ldots,n$ such that
\[\sum_{i=1}^{n}|\Lambda^{i}|\nu\left(\{x\in\Sigma_{k}: d_{\Lambda^{i}}(x|_{\Lambda},W^{i})<\varepsilon\}\right)>1-\delta.\]
\end{theorem}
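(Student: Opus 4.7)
The plan is to adapt the Dooley--Quas argument of \cite{DQ} to the AT$(n)$ setting by applying the AT$(n)$ property to a refined family of indicator functions of centered cylinders, and then extracting, from each of the $n$ resulting functions $g_1,\ldots,g_n$, a dominant funny word.

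First I would fix $\varepsilon,\delta>0$ and choose a large integer $L$ (whose size is dictated by $\varepsilon$, $\delta$, and the desired lower bound on $\min_i|\Lambda^i|$). Let $\mathcal{P}_L$ denote the partition of $\Sigma_k$ into centered cylinders of length $2L+1$. Apply the AT$(n)$ condition to the family $\{1_C\}_{C\in\mathcal{P}_L}$ with an error parameter $\varepsilon'\ll\varepsilon\delta$ to obtain unit-norm functions $g_1,\ldots,g_n\in L^1_+(\Sigma_k,\nu)$, an integer $N$, integer shifts $t_j^{(m)}$, and nonnegative coefficients $\alpha_{C,j}^{(m)}$ satisfying $\sum_{m,j}\alpha_{C,j}^{(m)}=\nu(C)$ and $\|1_C-\sum_{m,j}\alpha_{C,j}^{(m)}g_m\circ S_k^{t_j^{(m)}}\|_1<\varepsilon'$ for every $C\in\mathcal{P}_L$. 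After absorbing a further small error, one may replace each $g_m$ by its conditional expectation onto the $\sigma$-algebra generated by centered cylinders of length $2L+1$, obtaining a step function $g_m=\sum_W c_{m,W}1_{[W]}$ with $\sum_W c_{m,W}\nu([W])=1$.

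The heart of the argument is the selection, for each $m$, of a dominant funny word $W^m$ (based on a set $\Lambda^m$ built from the shifts $t_j^{(m)}$ together with the window $\{-L,\ldots,L\}$) via a pigeonhole or averaging argument applied to the weights $c_{m,W}\nu([W])$. Summing the approximation identities over all $C\in\mathcal{P}_L$ yields $1\approx\sum_{m,j}\beta_j^{(m)}g_m\circ S_k^{t_j^{(m)}}$ (with $\beta_j^{(m)}=\sum_C\alpha_{C,j}^{(m)}$), so the shifted supports $\bigcup_{m,j}S_k^{-t_j^{(m)}}[W^m]$ cover a set of $\nu$-measure at least $1-\delta/2$. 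Relaxing each cylinder $[W^m]$ to its Hamming ball of radius $\varepsilon$ only increases this mass, and subadditivity together with the shift-invariance of $\nu$ yields $\sum_m|\Lambda^m|\nu(\{x:d_{\Lambda^m}(x|_{\Lambda^m},W^m)<\varepsilon\})>1-\delta$, with the factor $|\Lambda^m|$ arising from counting the distinct shifts contributing to $g_m$ in the covering.

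The main obstacle will be the careful identification of $\Lambda^m$ so that $|\Lambda^m|$ simultaneously matches the combinatorial bound coming from subadditivity and can be made arbitrarily large by choosing $L$ and $N$ appropriately, all while tracking the cumulative errors from the step-function approximation of $g_m$, the pigeonhole extraction of $W^m$, and the cylinder-to-Hamming-ball relaxation. The hypothesis that $T$ is not AT$(n-1)$ is used implicitly to ensure that none of the $g_m$ is redundant, so that every dominant word $W^m$ contributes substantially to the covering.
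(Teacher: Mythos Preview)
Your outline has the right shape but two genuine gaps. First, extracting $W^m$ by pigeonholing a single dominant cylinder in a step-function decomposition $g_m=\sum_W c_{m,W}1_{[W]}$ does not work: nothing prevents the mass of $g_m$ from being spread evenly over many cylinders. The paper instead builds $W^m$ coordinate by coordinate over the set of \emph{shifts}: after a pruning step one arranges that whenever a coefficient $b^i_{A,j}>0$ the function $f_i\circ S_k^{-j}$ puts mass $>1-\varepsilon\delta/6$ on $A$, so for each surviving $j$ there is a unique such $A$, and $W^i_j$ is taken to be the $0$-th symbol of the cylinder containing it. The Hamming-ball inclusion then comes from an averaged error function $H^i_m$, not from relaxing a single cylinder.

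Second, and more seriously, your mechanism for making $\min_i|\Lambda^i|$ large is missing. You cannot choose $N$; it is handed to you by the AT$(n)$ property, and for any \emph{fixed} family of test functions nothing prevents one $g_m$ from being redundant or from appearing with only one shift. The paper handles this by refining the centered cylinders further into pieces $A$ of measure $<\delta_m\to 0$ (this is where non-atomicity is used) and running the construction along the whole sequence $m$. The not-AT$(n-1)$ hypothesis then enters explicitly, not implicitly: if $|\Lambda^{i(m)}_m|\le M$ for all $m$, the pruning forces at most one $A$ per shift $j$, so $f_{i(m),m}$ is used for at most $M$ pieces of total measure $\le M\delta_m\to 0$; the remaining pieces still generate $\mathfrak{B}_k$ and are approximated using only the other $n-1$ functions, yielding AT$(n-1)$, a contradiction. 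Without the refinement into small pieces this contradiction argument is unavailable, and your proposal has no substitute for it.
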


\begin{proof}
Let $(\delta_{m})_{m\geq 1}$ be a sequence decreasing to zero. The theorem will result if we prove that for every $\varepsilon>0$ there exist finite sets
$\Lambda^{i}_{m}\subset \mathbb{Z}$ and funny words $W^{i}_{m}$ based on $\Lambda^{i}_{m}$ for $i=1,2, \ldots n$ such that
$\sup_{m\geq 1}\min\{|\Lambda^{i}_{m}|,i=1,2,\ldots, n\}=\infty$ and
\[\sum_{i=1}^{n}|\Lambda^{i}_{m}|\nu\left(\{x\in\Sigma_{k}: d_{\Lambda^{i}}(x|_{\Lambda},W^{i}_{m})<\varepsilon\}\right)>1-\delta_{m}.\]

Let $\varepsilon>0$. For $m\geq 0$, denote by $\mathcal{C}_{m}$ the set of all centered cylinders of the form $[y_{0},y_{1}, \ldots, y_{2m}]_{-m}$ which have positive measure. Since $\nu$ is non atomic, for each $C\in\mathcal{C}_{m}$, one can find a measurable partition $\mathcal{P}_{C}$ of $C$ such that $\nu(A)<\delta_{m}$, for every $A\in \mathcal{P}_{C}$. Let $$\mathcal{J}_{m}=\{ A\in \mathcal{P}_{C}: \ C\in \mathcal{C}_{m}\}.$$
Notice that $\{A: \text{ there exists }m\geq 1\text{ such that }A\in\mathcal{J}_{m}\}$ generates (up to null sets) the sigma algebra $\mathfrak{B}_{k}$.

For $A\in \mathcal{J}_{m}$ let $g_{A}=\frac{1}{\nu(A)}{\bf 1}_{A}$ the normalized indicator function corresponding to $A$. Let $m\geq 1$ and $A\in\mathcal{J}_{m}$. Since, by assumption, $S_k$ is AT$(n)$, there exists $f_{1,m}, f_{2,m},\ldots, f_{n,m}\in L^{1}_{+}(\Sigma_{k},\nu)$ of norm $1$, sequences of non-negative numbers $\{a^{1}_{A,j}\}_{j\in\mathbb{Z}},\{a^{2}_{A,j}\}_{j\in\mathbb{Z}},\ldots,$ $\{a^{n}_{A,j}\}_{j\in\mathbb{Z}}$ with finitely many non-zero elements such that $\sum_{j}a^{1}_{A,j}+\sum_{j}a^{2}_{A,j}+\cdots + \sum_{j}a^{n}_{A,j} =1$ and
\[\|g_{A}-\sum_{i=1}^{n}\sum_{j}a^{i}_{A,j}f_{i,m}\circ S_k^{-j}\|<\frac{\varepsilon\delta_{m}^{2}}{36}.\]
It then follows that for any $A \in\mathcal{J}_{m}$ we have
\[\int_{\Sigma_{k}\setminus A}\sum_{i=1}^{n}\sum_{j}a^{i}_{A,j}f_{i,m}\circ S_k^{-j} d\nu <\frac{\varepsilon\delta_{m}^{2}}{36} .\]
Let $A\in\mathcal{J}_{m}$. For $i=1,2,\ldots, n$ define
$$P_{A}^{i}=\{j: \int_{\Sigma_{k}\setminus A}f_{i,m}\circ S_k^{-j}d\nu\geq\varepsilon\delta_{m}/6\}.$$ It easily can be seen that $\sum_{j\in P^{1}_{A}}a^{1}_{A,j}+\sum_{j\in P^{2}_{A}}a^{2}_{A,j}+\cdots +\sum_{j\in P^{n}_{A}}a^{n}_{A,j}<\delta_{m}/6.$ By setting the $a^{i}_{A,j}$ to be $0$ for $j\in P^{i}_{A}$, and  re-scaling the remaining $a^{i}_{A,j}$ we obtain coefficients $b^{1}_{A,j}, b^{2}_{A,j},\ldots ,b^{n}_{A,j}$ with $\sum_{j}b^{1}_{A,j}+\sum_{g}b^{2}_{A,j}+\sum_{j}b^{n}_{A,j}=1$, satisfying
\[\|g_{A}-\sum_{i=1}^{n}\sum_{j}b^{i}_{A,j}f_{i,m}\circ S_k^{-j}\|<\frac{\delta_{m}}{2},\]
and such that
\[\int_{\Sigma_{k}\setminus A} f_{i,m}\circ S_k^{-j}d\nu<\frac{\varepsilon\delta_{m}}{6}\]
if $b^{i}_{A,j}>0$.
For $i=1,2,\ldots, n$, let $$\Lambda^{i}_{m}=\{j\in\mathbb{Z}: \text{ there exists }A\in\mathcal{J}_{m}, b^{i}_{A,j}>0\}.$$
We claim that
\begin{equation}\label{cc}
\sup_{m\geq 1}\min\{|\Lambda^{i}_{m}|,i=1,2,\ldots, n\}=\infty.
\end{equation}
We will prove the claim by contradiction. Let us suppose that $$\min\{|\Lambda^{i}_{m}|,1\leq i\leq n\}\leq M<\infty,\text{ for all }m\geq 1.$$
Let $i(m)$ be such that $|\Lambda^{i(m)}_{m}|=\min\{|\Lambda^{i}_{m}|,1\leq i\leq n\}$, then  $|\Lambda^{i(m)}_{m}|\leq M$, for all $m\geq 1$.
Let $$\mathcal{L}_{m}=\{A\in\mathcal{J}_{m}: \text{ there exists }j\in\mathbb{Z}\text{ such that }b^{i(m)}_{A,j}>0\}.$$Remark that $|\mathcal{L}_{m}|\leq|\Lambda^{i(m)}_{m}|$. Since $\nu(A)<\delta_{m}$ for every $A\in \mathcal{J}_{m}$, it follows that $\lim_{m\rightarrow\infty}\nu(\cup_{A\in \mathcal{L}_{m}}A)=0$. Hence, any $f\in L^{1}_{+}(\Sigma_{k},\nu)$ can be approximated arbitrarily close in $L^{1}$-norm by step functions of the form $\sum_{A\in\mathcal{J}_{m}-\mathcal{L}_{m}}\alpha_{A}g_{A}$ with $\alpha_{A}\geq 0$ and  $\sum_{A\in\mathcal{J}_{m}\setminus\mathcal{L}_{m}}\alpha_{A}=\|f\|$, by choosing $m$ sufficiently large. 
 
Let $f_{l}\in L^{1}_{+}(\Sigma_{k},\nu)$, $l=1,2,\ldots, n$ be functions of norm $1$ and $\eta>0$. Choose $m\geq 1$
sufficiently large such that $\delta_{m}<\eta$ and such that exist non-negative numbers $\alpha_{A}^{i}$  with $\sum_{A\in\mathcal{J}_{m}\setminus\mathcal{L}_m} \alpha^{l}_{A}=1$ satisfying
\[\|f_{l}-\sum_{A\in\mathcal{J}_{m}\setminus\mathcal{L}_m}\alpha^{l}_{A} g_{A}\|_{1} <\frac{\eta}{2}, \]for $l=1,2,\ldots,n$.
Note that if $A\in \mathcal{J}_{m}\setminus\mathcal{L}_{m}$ then
\[\|g_{A}-\sum_{i=1, i\neq i(m)}^{n}\sum_{j}b^{i}_{A,j}f_{i,m}\circ S_k^{-j}\|<\frac{\delta_{m}}{2}.\]
We obtain then non-negative coefficients $c^{i,l}_{j}$, $i,l\in\{1,2,\ldots n\}$, $i\neq i(m)$ with finitely many of them different from zero
such that $$\sum _{i=1, i\neq i(m)}^{n}\sum_{j}c^{i,l}_{j}=1$$ and
\[\|f_{l}-\sum_{i=1, i\neq i(m)}^{n}\sum_{j}c^{i,l}_{j} f_{i,m}\circ S_k^{-j}\| <\eta.\]
Since such an approximation can be done for any $\eta>0$, it follows that $S_k$ is AT$(n-1)$. But this contradicts our hypothesis, and therefore, the claim (\ref{cc}) holds.

Notice that for $i=1,2,\ldots, n$ we have
\[\sum_{A\in\mathcal{J}_{m}}\sum_{j: b^{i}_{A,j}>0}\int_{\Sigma_{k}\setminus A}f_{i,m}\circ S_k^{-j}d\nu<\frac{\varepsilon\delta_{m}|\Lambda^{i}_{m}|}{6}\]
and then,
\begin{align*}
\frac{\varepsilon\delta_{m}|\Lambda_{m}^{i}|}{6}&>\sum_{A\in\mathcal{J}_{m}}\sum_{j: b^{i}_{A,j}>0}\int f_{i,m}\circ S_k^{-j}\cdot1_{\Sigma_{k}\setminus A}d\nu=\int f_{i,m}\sum_{A\in\mathcal{J}_{m}}\sum_{g: b^{i}_{A,j}>0} 1_{\Sigma_{k}\setminus A}\circ S_k^{j}d\nu.
\end{align*}
Let \[H^{i}_{m}=\frac{1}{|\Lambda_{m}^{i}|}\sum_{A\in\mathcal{J}_{m}}\sum_{j: b^{i}_{A,j}>0}1_{\Sigma_{k}\setminus A}\circ S_k^{j}.\]
For $j\in\Lambda_{m}^{i}$, there exists a unique set $A\in\mathcal{J}_{m}$ such that $\int_{A} f_{i,m}\circ S_k^{-j}d\nu>1-\varepsilon\delta_{m}/6$. Let $[z]_{0}=\{x\in\Sigma_{k}: x_{0}=z\}$ be the unique cylinder set from $\mathcal{C}_{1}$ containing $A$ and define $W_{m,j}^{i}$ to be this $z$. Denote by $W^{i}_{m}$ the funny word $(W_{m,j}^{i})_{j\in\Lambda_{m}^{i}}$ based on $\Lambda_{m}^{i}$.
Since the above inequality demonstrates that $\int H^{i}_{m}f_{i,m} d\nu < \varepsilon\delta_{m}/6$, it follows that $\int_{\{z: H^{i}_{m}(z)>\varepsilon\}}f_{i,m} (x)d\nu < \delta_{m}/6$. Let $\widetilde{f}_{i,m}$ be the function defined by
\begin{equation*}
\widetilde{f}_{i,m}(x)=\left\{
\begin{array}{l}
0 \ \ \ \ \ \ \ \ \ \ \ \ \ \ \ \ \ \ \ \ \ \ \ \ \ \ \ \ \ \ \ \ \ \text{ if }   H^{i}_{m}(x)>\varepsilon \\[0.2cm]
f(x)/\int_{\{z: H^{i}_{m}(z)<\varepsilon\}}f_{i,m} d\nu  \ \ \ \ \ \text{ otherwise}.\\[0.2cm]

\end{array}
\right.
\end{equation*}
Clearly, \[H^{i}_{m}(x)\geq \frac{1}{\Lambda_{m}^{i}}\text{card}\{j\in \Lambda_{m}^{i}: W^{k}_{m,j}\neq x_{j}\}.\]
Therefore, the support of $\widetilde{f}_{i,m}$ is contained in $\{x\in\Sigma_{k}: d(x|_{\Lambda^{i}_{m}},W^{i}_{m})<\varepsilon\}.$
Since $\|\widetilde{f}_{i,m}-f_{i,m}\|<\frac{\delta_{m}}{2}$ for $i=1,2,\ldots, n$, for  $A\in\mathcal{J}_{m}$ we have that
\[\|g_{A}-\sum_{i=1}^{n}\sum_{j}b^{i}_{A,j}\widetilde{f}_{i,m}\circ S_k^{-j}\|<\delta_{m}.\]
Hence, summming over $A\in\mathcal{J}_{m}$ we get
\[\|1-\sum_{i=1}^{n}\sum_{A\in\mathcal{J}_{m}}\sum_{j}\nu(A)b^{i}_{A,j}\widetilde{f}_{i,m}\circ S_k^{-j}\|<\delta_{m}.\]
Since the support of each $\widetilde{f}_{i,m}$ is contained in $\{x\in\Sigma_{k}: d_{\Lambda^{i}_{m}}(x|_{\Lambda^{i}_{m}},W^{i}_{m})<\varepsilon\}$, it follows that
$$\sum_{i=1}^{n}\sum_{A\in\mathcal{J}_{m}}\sum_{j}\nu(A)b^{i}_{A,j}\widetilde{f}_{i,m}\circ S_k^{-j}$$
is supported on a set of measure at most \[\sum_{i=1}^{n}|\Lambda^{i}_{m}|\{x\in\Sigma_{k}: d_{\Lambda^{i}_{m}}(x|_{\Lambda^{i}_{m}},W^{i}_{m})<\varepsilon\}.\] Therefore
\[\sum_{i=1}^{n}|\Lambda^{i}_{m}|\nu\left(\{x\in\Sigma_{k}: d_{\Lambda^{i}_{m}}(x|_{\Lambda},W^{i}_{m})<\varepsilon\}\right)>1-\delta_{m}\]
and the proof of the theorem is complete.
\end{proof}

\section{AT$(n)$ systems have zero entropy}

In this section we show that for any positive integer $n$, Bernoulli shifts are not AT$(n)$. We also show that AT$(n)$ systems have zero entropy. Let us prove first the following lemma.
\begin{lemma}\label{xx}
A factor of an AT$(n)$ system is AT$(n)$.
\end{lemma}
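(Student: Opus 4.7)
The plan is to pull test functions from the factor back up to the ambient space, apply AT$(n)$ there, and then project the witnesses back down by conditional expectation. Concretely, realize the factor by a measure preserving map $\pi \colon (X,\mathfrak{B},\mu,T)\to (Y,\mathfrak{C},\nu,S)$ with $S\circ\pi=\pi\circ T$, and set $\mathcal{A}=\pi^{-1}\mathfrak{C}\subset \mathfrak{B}$. The key structural fact I will use repeatedly is that $\mathcal{A}$ is $T$-invariant, so the conditional expectation $E(\cdot\mid\mathcal{A})$ commutes with composition by powers of $T$, i.e. $E(g\circ T^{t}\mid\mathcal{A})=E(g\mid\mathcal{A})\circ T^{t}$. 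In addition, $E(\cdot\mid\mathcal{A})$ is an $L^1$-contraction sending $L^1_+(X,\mu)$ into itself, and every $\mathcal{A}$-measurable integrable function is of the form $h\circ\pi$ for a unique $h\in L^1(Y,\nu)$.

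Given $\varepsilon>0$ and test functions $f_1,\ldots,f_k\in L^1_+(Y,\nu)$, first lift to $\tilde f_i:=f_i\circ\pi\in L^1_+(X,\mu)$ (with $\|\tilde f_i\|_1=\|f_i\|_1$ since $\pi_*\mu=\nu$). Apply the AT$(n)$ property of $T$ to $\{\tilde f_i\}_{i=1}^{k}$: there exist $g_1,\ldots,g_n\in L^1_+(X,\mu)$, nonnegative reals $\alpha_{i,j}^{(m)}$, and integers $t_j^{(m)}$ such that
\[
\Bigl\|\tilde f_i-\sum_{m=1}^{n}\sum_{j=1}^{N}\alpha_{i,j}^{(m)}\,g_m\circ T^{t_j^{(m)}}\Bigr\|_1<\varepsilon,\qquad i=1,\ldots,k.
\]

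Now apply $E(\cdot\mid\mathcal{A})$ to the expression inside the norm. Since $\tilde f_i$ is already $\mathcal{A}$-measurable, $E(\tilde f_i\mid\mathcal{A})=\tilde f_i$. Setting $h_m\in L^1_+(Y,\nu)$ to be the (unique) function with $h_m\circ\pi=E(g_m\mid\mathcal{A})$, the $T$-invariance of $\mathcal{A}$ gives
\[
E\bigl(g_m\circ T^{t_j^{(m)}}\mid\mathcal{A}\bigr)=E(g_m\mid\mathcal{A})\circ T^{t_j^{(m)}}=(h_m\circ S^{t_j^{(m)}})\circ\pi .
\]
Because $E(\cdot\mid\mathcal{A})$ is an $L^1$-contraction, the AT$(n)$ inequality survives, and both sides are now of the form $F\circ\pi$. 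Using $\pi_*\mu=\nu$ to rewrite the $L^1(X,\mu)$-norm as an $L^1(Y,\nu)$-norm yields
\[
\Bigl\|f_i-\sum_{m=1}^{n}\sum_{j=1}^{N}\alpha_{i,j}^{(m)}\,h_m\circ S^{t_j^{(m)}}\Bigr\|_1<\varepsilon,\qquad i=1,\ldots,k,
\]
which is exactly the AT$(n)$ condition for $S$ with the same data $n$, $N$, $\alpha_{i,j}^{(m)}$, $t_j^{(m)}$ and witnesses $h_m$.

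There is no real obstacle here; the only points that need a line of justification are the commutation $E(g\circ T^{t}\mid\mathcal{A})=E(g\mid\mathcal{A})\circ T^{t}$ (which follows from the change-of-variables identity $\int_{A} g\circ T\,d\mu=\int_{T^{-1}A}g\,d\mu$ together with $T^{-1}A\in\mathcal{A}$) and the identification of $\mathcal{A}$-measurable functions with pull-backs from $Y$, both of which are standard. If desired, the normalization $\|h_m\|_1=1$ and $\sum_{m,j}\alpha_{i,j}^{(m)}=\|f_i\|_1$ can be enforced afterwards by the remark immediately following the definition of AT$(n)$ in the introduction.
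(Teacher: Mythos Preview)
Your proof is correct and follows essentially the same approach as the paper: lift the test functions from the factor via $\pi$, apply AT$(n)$ upstairs, then push the witnesses down by the conditional expectation onto the $T$-invariant $\sigma$-algebra $\pi^{-1}\mathfrak{C}$ and identify $E(g_m\mid\pi^{-1}\mathfrak{C})$ with a pull-back $h_m\circ\pi$. Your write-up is in fact slightly more careful than the paper's, since you make explicit the $L^1$-contraction property and the commutation $E(g\circ T^{t}\mid\mathcal{A})=E(g\mid\mathcal{A})\circ T^{t}$.
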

\begin{proof}
Let $\pi$ be a factor map from an AT$(n)$ system $(X,\mathfrak{B},\mu,T)$ onto another dynamical system $(Y,\mathfrak{F},\nu,S_k)$. Let $f_{1},f_{2},\ldots, f_{n+1}\in L^{1}_{+}(X,\mu)$. Since $(X,\mathfrak{B},\mu, T)$ is AT$(n)$, there exists $g_{1},g_{2},\ldots,g_{n}\in L^{1}_{+}(X,\mu)$,  a
positive integer $N$, reals $\alpha_{i,j}^{(m)}\geq 0$, for  $m=1,2,\ldots,n$, $i=1,2,\ldots,n+1$, $j=1,2,...,N$ and integers $\{t^{(m)}_j \}^{m=1,...,n}_{j=1,...,N}$, such that
\[\|f_{i}\circ\pi-\sum_{m=1}^{n}\sum_{j=1}^{N}\alpha_{i,j}^{(m)}g_{m}\circ T^{t_{j}^{(m)}}\|_{1}<\varepsilon,\]
for $i=1,2,\ldots , n + 1$. Taking expectation with respect to the $T$-invariant $\sigma$-algebra $\pi^{-1}(\mathfrak{B})$, we obtain
\[\|f_{i}\circ\pi-\sum_{m=1}^{n}\sum_{j=1}^{N}\alpha_{i,j}^{(m)}E(g_{m}|\pi^{-1}(\mathfrak{B}))\circ T^{t_{j}^{(m)}}\|_{1}<\varepsilon,\]
for each $i$. Notice that for all $i$, we can write $E(g_{m}|\pi^{-1}(\mathfrak{B})=G_{m}\circ\pi$ for some measurable function $G_m$ on $Y$, and then, since $S_k\circ\pi=\pi\circ T$, we have
\[\|f_{i}-\sum_{m=1}^{n}\sum_{j=1}^{N}\alpha_{i,j}^{(m)}G_{m}\circ S_k^{t_{j}^{(m)}}\|_{1}<\varepsilon.\]
We can then conclude that the system  $(Y,\mathfrak{F},\nu,S_k)$ is AT$(n)$.
\end{proof}
\begin{proposition}
Let $(\Sigma_{k},\mathfrak{B}_{k},\mu_{p},S_k)$ be the Bernoulli shift associated to the probability vector $p=(p(1),\ldots p(k))$. Then, for any $n\geq 1$, the shift map $S_k$ is not AT$(n)$.
\end{proposition}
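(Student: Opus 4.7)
The plan is to argue by contradiction, assuming $S_k$ is AT$(n)$ for some $n\ge 1$ and letting $n_0$ denote the smallest such integer (we may assume $k\ge 2$, as $k=1$ gives a trivial one-point system). If $n_0=1$ then $S_k$ is approximately transitive, so by the Connes--Woods theorem \cite{CW} it has zero Kolmogorov--Sinai entropy, contradicting $h(\mu_p)=-\sum_{i=1}^{k}p(i)\log p(i)>0$. I therefore focus on the case $n_0\ge 2$.

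In that case $S_k$ is AT$(n_0)$ but not AT$(n_0-1)$, and $\mu_p$ is non-atomic, so Theorem \ref{t1} applies. Given $\varepsilon,\delta>0$ (to be chosen), it produces finite sets $\Lambda^{1},\ldots,\Lambda^{n_0}\subset\mathbb{Z}$ with $\min_i|\Lambda^i|$ arbitrarily large, together with funny words $W^i$ based on $\Lambda^i$, satisfying
\[
\sum_{i=1}^{n_0}|\Lambda^i|\,\mu_p\bigl(\{x\in\Sigma_k:d_{\Lambda^i}(x|_{\Lambda^i},W^i)<\varepsilon\}\bigr)>1-\delta.
\]
The goal is then to contradict this lower bound by showing that each summand is $o(1)$ as $\min_i|\Lambda^i|\to\infty$.

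For the main estimate I would exploit the independence built into $\mu_p$. Let $p_{\max}=\max_i p(i)$; since all $p(i)>0$ and $k\ge 2$, we have $p_{\max}<1$. Under $\mu_p$ the indicators $1_{\{x_j\ne W^i_j\}}$, $j\in\Lambda^i$, are independent $\{0,1\}$-valued random variables with mean $1-p(W^i_j)\ge 1-p_{\max}$, so $d_{\Lambda^i}(x|_{\Lambda^i},W^i)$ has expectation at least $1-p_{\max}$. Hoeffding's inequality then gives, for every $\varepsilon<1-p_{\max}$,
\[
\mu_p\bigl(\{x:d_{\Lambda^i}(x|_{\Lambda^i},W^i)<\varepsilon\}\bigr)\le \exp\bigl(-2|\Lambda^i|(1-p_{\max}-\varepsilon)^2\bigr),
\]
uniformly in the word $W^i$. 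Fixing $\varepsilon\in(0,1-p_{\max})$ and $\delta\in(0,1/2)$, each summand is at most $|\Lambda^i|e^{-c|\Lambda^i|}$ for some $c>0$, so the whole sum tends to $0$ as $\min_i|\Lambda^i|\to\infty$, contradicting the bound $>1-\delta>1/2$.

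The main obstacle is mostly conceptual: one has to recognise that the conclusion of Theorem \ref{t1} is essentially an ``$\varepsilon$-Hamming-ball covering'' statement for $\Sigma_k$, and that the $|\Lambda^i|$ prefactor---which could be alarmingly large---is harmless against the exponential concentration of product Bernoulli measures around any fixed word. Once that point is accepted, the remainder is a routine Hoeffding estimate together with the minimality of $n_0$.
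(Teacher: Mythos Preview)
Your proof is correct and follows essentially the same route as the paper: both argue by induction (equivalently, by taking the minimal $n_0$), invoke Connes--Woods for the base case, and for $n_0\ge2$ apply Theorem~\ref{t1} and contradict its conclusion by showing that Bernoulli Hamming balls have measure decaying faster than $1/|\Lambda|$. The only difference is the tool used for that last estimate: the paper bounds $\mu_p(d_\Lambda(x,W)<\varepsilon)$ directly by $\binom{m}{[m\varepsilon]}r^{m-[m\varepsilon]}$ and appeals to Stirling, whereas you obtain the same exponential decay more cleanly via Hoeffding's inequality.
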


\begin{proof}
We prove the proposition by induction. It is well known that $S_k$ has positive entropy and therefore is not AT$(1)$.
Let us assume now that for some $n\geq 2$, $S_k$ is not AT$(n-1)$.
Let $r=\max\{p(i); i=1,2,\ldots,k\}$.
If $W$ is a funny word based on $\Lambda$ then \[\mu_{p}(x\in\Sigma_{k}: d_{\Lambda}(x|_{W},W))\leq\left(\begin{array}{c}
    m \\
    \left[m\varepsilon\right]
  \end{array}
\right)r^{m-[m\varepsilon]},\]where $|\Lambda|=m$.

Notice that if $\varepsilon$ is sufficiently small then
\[\frac{r(1-\varepsilon)^{\varepsilon}}{(1-\varepsilon)\varepsilon^{\varepsilon}r^{\varepsilon}}<1. \]
For $m$ sufficiently large, we have
\[\left( \begin{array}{c}
    m \\
    \left[m\varepsilon\right]
  \end{array}
\right)r^{m-[m\varepsilon]}<\frac{2(\frac {m}{e})^{m}\sqrt{2\pi m}}{(\frac{m\varepsilon}{e})^{m\varepsilon}\sqrt{2\pi m\varepsilon}
(\frac{m(1-\varepsilon)}{e})^{m(1-\varepsilon)}\sqrt{2\pi m(1-\varepsilon)}}r^{m-[m\varepsilon]}\]\[<\frac{1}{\sqrt{2\pi m\varepsilon(1-\varepsilon)}}
\left(\frac{r(1-\varepsilon)^{\varepsilon}}{(1-\varepsilon)\varepsilon^{\varepsilon}r^{\varepsilon}}\right)^{m}<\frac{1-\varepsilon}{n\cdot m}.\]
It then follows that if $|\Lambda|$ sufficiently large, $\mu_{p}(x\in\Sigma_{k}: d_{\Lambda}(x|_{W},W)<\varepsilon)<\frac{1-\varepsilon}{n\cdot|\Lambda|}$. Then Theorem \ref{t1}, implies that $S_k$ is not AT$(n)$.
\end{proof}

We can  prove now the result announced in the beginning concerning the entropy of AT$(n)$ systems.

\begin{corollary}
Let $n$ be a positive integer and let $T$ be an ergodic measure preserving transformation of a standard probability space $(X,\mathfrak{B},\mu)$ which is AT$(n)$. Then $T$ has zero entropy.
\end{corollary}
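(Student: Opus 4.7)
The plan is to derive the corollary by contradiction, combining the two results already established in this section with Sinai's factor theorem. Suppose, aiming at a contradiction, that $T$ is AT$(n)$ but has positive Kolmogorov--Sinai entropy $h(T) > 0$. Since $T$ is ergodic of positive entropy on a standard probability space, Sinai's factor theorem provides, for every $0 < h \leq h(T)$, a measure-theoretic factor map from $(X,\mathfrak{B},\mu,T)$ onto some Bernoulli shift of entropy $h$. In particular, choosing $h < h(T)$ sufficiently small (for instance, the Bernoulli $2$-shift associated to a probability vector $(p,1-p)$ with $p$ close to $0$ or $1$) yields a factor map
\[
\pi : (X,\mathfrak{B},\mu,T) \longrightarrow (\Sigma_{k}, \mathfrak{B}_{k}, \mu_{p}, S_{k})
\]
onto some Bernoulli shift.

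By Lemma \ref{xx}, factors of AT$(n)$ systems are themselves AT$(n)$, so the Bernoulli shift $(\Sigma_{k}, \mathfrak{B}_{k}, \mu_{p}, S_{k})$ would have to be AT$(n)$. This directly contradicts Proposition 3.2, which asserts that no Bernoulli shift is AT$(n)$ for any $n \geq 1$. Hence the assumption $h(T) > 0$ is untenable, and we conclude $h(T) = 0$.

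There is no real obstacle here beyond invoking the correct black boxes: Sinai's theorem requires precisely the hypotheses we have (ergodicity and a standard probability space), Lemma \ref{xx} transfers the AT$(n)$ property from $T$ to the Bernoulli factor, and Proposition 3.2 provides the contradiction. The only point worth being careful about is ensuring the Bernoulli factor produced has entropy strictly positive (so that it is genuinely a nontrivial Bernoulli shift for which Proposition 3.2 applies); this is automatic since Sinai's theorem supplies Bernoulli factors of every entropy in $(0,h(T)]$.
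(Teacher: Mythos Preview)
Your proof is correct and follows essentially the same approach as the paper: assume positive entropy, invoke Sinai's factor theorem to obtain a Bernoulli factor, transfer the AT$(n)$ property to that factor via Lemma~\ref{xx}, and derive a contradiction with the proposition that Bernoulli shifts are never AT$(n)$. The only cosmetic difference is that the paper simply picks any Bernoulli shift with $h(S_k)\le h(T)$, whereas you discuss choosing one of small entropy; this distinction is immaterial.
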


\begin{proof}
We prove this lemma by contradiction. Assume that the entropy $h(T)$ of $T$ is strictly positive. Consider a Bernoulli shift $(\Sigma_{k},\mathfrak{B}_{k},\mu_{p},S_k)$ associated to a probability vector $p=(p(1),p(2)$, $\ldots,p(k))$ such that $h(T)\geq h(S_k)=\sum_{i=1}^{k}p(i)\log p(i)$. By Sinai's theorem, $(\Sigma_{k},\mathfrak{B}_{k},\mu_{p},S_k)$ is a factor of the system $(X,\mathfrak{B},\mu,T)$. Then, Lemma \ref{xx}, implies that $(\Sigma_{k},\mathfrak{B}_{k},\mu_{p},S_k)$ is AT$(n)$. This is a contradiction. 
\end{proof}

\bigskip
In the last part of this setion we will show that there exists  a zero entropy dynamical system which is not AT($n$) for any $\geq 1$. 
Let $\alpha$ be an irrational number and let $T$ be the transformation of the 2-torus $\mathbb{T}^2$ defined by
$$T(s,t)=(s+\alpha, 2s+t+\alpha) \ \ (\text{mod }1).$$ This transformation, studied by H. Furstenberg \cite{Fu1,Fu2}, is measure preserving, uniquely ergodic and has zero entropy. It was proved by A. Dooley and A. Quas in \cite{DQ} that $T$ is not approximately transitive. 

\begin{proposition}
The zero entropy transformation $T$ defined above does not have property AT($n$), for any positive integer $n$.
\end{proposition}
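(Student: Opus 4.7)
The plan is to argue by induction on $n$, using Theorem~\ref{t1} as the main tool. The base case $n=1$ is precisely the Dooley--Quas theorem~\cite{DQ}, which asserts that $T$ is not AT$(1)$. For the inductive step, fix $n\ge 2$, assume inductively that $T$ is not AT$(n-1)$, and suppose for contradiction that $T$ is AT$(n)$. Since the Furstenberg system $(\mathbb{T}^{2},T)$ is ergodic with zero entropy (in particular, finite entropy), Krieger's generator theorem supplies a finite generating partition $\mathcal{P}$ under which $T$ is measure-theoretically isomorphic to a shift $S_{k}$ on some $\Sigma_{k}$ equipped with a non-atomic, shift-invariant probability measure $\nu$. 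This isomorphism transports AT$(n)$ and the failure of AT$(n-1)$ over to $S_{k}$, so the hypotheses of Theorem~\ref{t1} are met.

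Theorem~\ref{t1} then yields, for every $\varepsilon,\delta>0$, finite sets $\Lambda^{1},\dots,\Lambda^{n}\subset\mathbb{Z}$ with $\min_{i}|\Lambda^{i}|$ arbitrarily large and funny words $W^{i}$ based on $\Lambda^{i}$ satisfying
\[
\sum_{i=1}^{n}|\Lambda^{i}|\,\nu\bigl(\{x\in\Sigma_{k}:d_{\Lambda^{i}}(x|_{\Lambda^{i}},W^{i})<\varepsilon\}\bigr)>1-\delta.
\]
The contradiction will come from the quantitative estimate
\[
|\Lambda|\,\sup_{W}\nu\bigl(\{x\in\Sigma_{k}:d_{\Lambda}(x|_{\Lambda},W)<\varepsilon\}\bigr)\longrightarrow 0\quad\text{as }|\Lambda|\to\infty,
\]
for every sufficiently small $\varepsilon>0$, with the supremum taken over all funny words $W$ based on $\Lambda$. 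Summing at most $n$ such terms would then force the previous display to be strictly less than $1-\delta$ once $\min_{i}|\Lambda^{i}|$ is large enough, in conflict with Theorem~\ref{t1}. This estimate is a quantitative sharpening of the Dooley--Quas argument, which provides the weaker bound $\sup_{W}|\Lambda|\,\nu(\cdots)<1-\delta$ that suffices to exclude AT$(1)$.

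The main obstacle is proving this uniform decay. The key algebraic input is the explicit formula
\[
T^{m}(s,t)=\bigl(s+m\alpha,\;t+2ms+m^{2}\alpha\bigr)\pmod 1,
\]
so that the fibre coordinate of a $T$-orbit evolves as a quadratic polynomial in $m$. Restricting the $\mathcal{P}$-name of $(s,t)$ at a position $j\in\Lambda$ imposes an affine constraint in $(s,t)$ modulo $1$, and the Hamming-neighbourhood condition along $\Lambda$ becomes a simultaneous Diophantine condition governed by the quadratic sequence $\{m^{2}\alpha\}$. Quantitative Weyl equidistribution for $\{m^{2}\alpha\}$, combined with a van der Corput / $L^{2}$-orthogonality argument in the two torus variables, should bound the Lebesgue measure of this constraint set by $o(1/|\Lambda|)$, uniformly in $W$. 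The delicate technical point is to make this bound genuinely uniform over funny words and over the (possibly irregular) partition produced by Krieger's theorem; one plausible remedy is to approximate $\mathcal{P}$ by a partition whose atoms have piecewise smooth boundaries, for which the required Fourier-analytic estimates are available.
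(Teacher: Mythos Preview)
Your inductive framework via Theorem~\ref{t1} matches the paper's, but the heart of your argument---the decay estimate $|\Lambda|\,\nu(B_{\varepsilon,W,\Lambda})\to 0$ as $|\Lambda|\to\infty$---is a genuine gap, and the paper does \emph{not} prove it. With only pairwise independence of the symbol coordinates (which is what the quadratic orbit formula $T^{m}(s,t)=(s+m\alpha,\,t+2ms+m^{2}\alpha)$ actually delivers: for $m\neq n$ the pair $(y_{m},y_{n})$ is uniformly distributed), second-moment/Chebyshev methods give at best $\nu(B)=O(1/|\Lambda|)$, i.e.\ $|\Lambda|\,\nu(B)\le C$ for some constant $C$, not $o(1)$. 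Upgrading to $o(1)$ would require control of higher joint moments uniformly over \emph{arbitrary} finite $\Lambda\subset\mathbb{Z}$, and Weyl/van~der~Corput estimates for $\{j^{2}\alpha\}$ do not give this along sparse or irregular index sets. Your Krieger-partition route compounds the difficulty: you lose all explicit geometric structure, and the suggested ``approximation by smooth partitions'' does not obviously preserve the Hamming-ball measure bounds you need.

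The paper sidesteps the $o(1)$ problem by a different mechanism: instead of a single generating partition, it uses for each $k\ge 2$ the explicit factor given by the $(k{+}1)$ horizontal strips $A_{i}=\mathbb{T}\times[\tfrac{i-1}{k+1},\tfrac{i}{k+1})$ (invoking Lemma~\ref{xx}, not Krieger). Pairwise independence gives $E|S|^{2}=|\Lambda|$ for the character sum $S=\sum_{j\in\Lambda}\varepsilon^{\,y_{j}-x_{j}}$, and Markov's inequality bounds $\nu(|S|>c|\Lambda|)$ by $O(1/|\Lambda|)$. The key additional idea is the fibre rotation $R(s,t)=(s,t+\tfrac{1}{k+1})$, which commutes with $T$ and cyclically permutes the strips; this splits the large-$|S|$ event into $k{+}1$ pieces of equal measure, only one of which contains the Hamming ball. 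The net bound is $|\Lambda|\,\nu(B_{\frac{1}{4k+4},x,\Lambda})\le \frac{4k+4}{(2k+1)^{2}}$, a constant of order $1/k$. Now vary $k$ with the target $n$: for $k\ge n$ the sum of $n$ such terms is at most $n\cdot\frac{4k+4}{(2k+1)^{2}}<1$, contradicting Theorem~\ref{t1} and showing the factor (hence $T$) is not AT$(n)$. So the missing idea is not sharper equidistribution but rather \emph{changing the factor with $n$} and exploiting the extra $\mathbb{Z}/(k{+}1)$ symmetry to push the constant below $1/n$.
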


\begin{proof}
For $k\geq 2$ let $\mathcal{P}$ be the partition of $\mathbb{T}^2$ consisting of the sets $A_{i}=\mathbb{T}\times \left[\frac{i-1}{k},\frac{i}{k}\right)$, for $i=1,2,\ldots, k+1$. Let $\Sigma_{k+1}=\{1,2,\ldots, k+1\}^{\mathbb{Z}}$ and let $\pi:\mathbb{T}^2\rightarrow \Sigma_{k+1}$ be the natutal map from $\mathbb{T}^2$ in $\Sigma_{k+1}$, defined by $\pi(z)=(x_n)_{n\in\mathbb{Z}}$, where $x_n=i$ if $T^{n}(z)\in A_i$. 

If $\mu$ is the Haar measure $\mu$ of $\mathbb{T}^2$, denote by $\nu$ the measure $\pi^{-1}\circ \mu$ induced by $\mu$ on $\Sigma_{k+1}$.


\noindent We show first that for all $m\neq n$
\begin{equation}\label{eq1}
\nu\left(\{y\in \Sigma_{k+1}: \  y_m=i, \quad  y_n=j\}\right)=\frac{1}{(k+1)^2}
\end{equation} 
Since $\nu$ is shift invariant it is enough to show that for every $n\in\mathbb{Z}$ $\nu\left(\{y\in \Sigma_{k+1}: \  y_0=i, \quad y_n=j\}\right)=\frac{1}{(k+1)^2}$. This is the Haar measure of the set of points $(s,t)$ such that $s\in A_i$ and $\pi_2(T^{n}(s,t))\in A_j$ (here $\pi_2(s,t)=t$ for $(s,t)\in\mathbb{T}^2$). In other words, this is the set of all $(s,t)$ such that $s\in A_i$ and $\langle t+n^2\alpha+2ns\rangle\in A_j$, where $\langle z\rangle$ denotes the fractional part of $z\in\mathbb{R}$. It easily can be observed that the measure of this set is $\frac{1}{(k+1)^2}$.
Let $\Lambda$ be an arbitrary subset of integers of cardinality $n$ and fix $x\in \Sigma_{k+1}$. Define $\Lambda_{j}:\Sigma_{k+1}\rightarrow\mathbb{C}$ by


\begin{equation*}
\Lambda_{j}(y)=\begin{cases}
                              1 &  \text { if } \quad y_j= x_j, \\
                              \varepsilon &   \text{ if }\quad  y_j= x_j+1,  \\
                              \cdots\\
                              \varepsilon^{k} &   \text{ if }\quad  y_j=x_j+k, 
\end{cases}
\end{equation*}
where $\varepsilon=e^{2\pi i/(k+1)}$. Let $S=\sum_{j\in \Lambda} \Lambda_j$. From (\ref{eq1}) it results that $E(|S|^2)=n$ and then, by Markov inequality we get 
\begin{align*}\nu\left(|S|>\frac{2k+1}{2k+2}\cdot n\right)&= \ \nu\left(|S|^2>\frac{(2k+1)^2}{(2k+2)^2}\cdot n^2\right)\\& \
\leq \frac{E(|S|^2)}{n^2}\cdot\frac{(2k+2)^2}{(2k+1)^2}=\frac{(2k+2)^2}{(2k+1)^2}\cdot\frac{1}{n}.
\end{align*}
For $y\in \Sigma_{k+1}$ define $$a_{i}(y)=\text{card}\{{j\in \Lambda: \  y_j=x_j+i-1}\}\text{ for }i=1,2,\ldots k+1.$$ Then  $$S=a_1+\varepsilon a_2+\varepsilon^2 a_3+\cdots+\varepsilon^{k} a_{k+1}.$$ 
Now, consider the sets
\begin{align*}B_{\frac{1}{4k+4},x,\Lambda}&=\left\{y\in \Sigma_{k+1}: \  d_{\Lambda}(y|_{\Lambda},x|_{\Lambda})<\frac{1}{4k+4}\right\},\\A&=\left\{y\in \Sigma_{k+1}: \ |S|>\frac{2k+1}{2k+2}\cdot n\right\}\end{align*}
and for $i=1,2,\ldots ,k+1$,
\begin{align*}
A_i=\left\{y\in A : \ a_{i}(y)>a_j(y)\text{ for all }j\neq i\right\}.
\end{align*}
Notice that $B_{\frac{1}{4k+4},x,\Lambda}\subset A_1$. Indeed if $y\in B_{\frac{1}{4k+4},x,\Lambda}$, then $a_{1}(y)\geq \frac{4k+3}{4k+4}\cdot n$, and consequently,
\begin{align*}
|S(y)|>|a_1(y)|&-|\varepsilon a_{2}(y)+\cdots \varepsilon^{k}a_{k+1}(y)|\\ &\geq \left(\frac{4k+3}{4k+4}-\frac{1}{4k+4}\right)\cdot n=\frac{2k+1}{2k+2}\cdot n.
\end{align*}
Denote by $R$ be the transformation of $\mathbb{T}^2$ defined by $R(s,t)=(s,t+\frac{1}{k+1})$ $(\text{mod }1)$, which clearly commutes with $T$. It is easy to see that $a_{i}(\pi(s,t))=a_{i+1}(\pi(R(s,t)))$ for all $(s,t)\in \mathbb{T}^2$ and $i\in\{1,2,\ldots,k\}$ and then  $R(\pi^{-1}(A_i))=\pi^{-1}(A_{i+1})$ for every $i\in\{1,2,\ldots, k\}$. Since $R$ is a measure preserving transformation it follows that 
$\nu(A_i)=\mu(\pi^{-1}(A_i))=\mu(R(\pi^{-1}(A_i)))=\mu(\pi^{-1}(A_{i+1}))=\nu(A_{i+1})$ for every $i\in\{1,2,\ldots, k\}$ and then, $\nu(A_1)\leq \frac{1}{k+1}\cdot\nu(A)$. Hence
\[|\Lambda|\cdot\nu(B_{\frac{1}{4k+4},x,\Lambda})\leq |\Lambda|\cdot\nu(A_1)\leq \frac{4k+4}{4k^2+4k+1}\]and therefore
\begin{equation}\label{atn}
k\cdot|\Lambda|\cdot\nu(B_{\frac{1}{4k+4},x,\Lambda})\leq 1-\frac{1}{4k^2+4k+1}
\end{equation}
Theorem 2.1 from \cite{DQ}, implies that the system $(\Sigma_{k+1},\mathfrak{B}_{k+1},\nu, S_{k+1})$ is not AT. Then (\ref{atn}) and Theorem \ref{t1} implies that the system is not AT($2$). Applying successively the same Theorem \ref{t1}, it results that the system $(\Sigma_{k+1},\mathfrak{B}_{k+1},\nu, S_{k+1})$ is not AT($k$). Using now Proposition \ref{xx}, it follows that that $T$ is not AT($k$). Since $k\geq 2$ was arbitrary chosen, the proposition is proved.
\end{proof}

\subsection*{Acknowledgement.} This work was supported by a grant of the Romanian Ministry of Education, CNCS - UEFISCDI, project number PN-II-RU-PD-2012-3-0533.


\end{document}